\documentclass[12pt]{amsart}
\usepackage{jrmacros}
\usepackage{thmtools, thm-restate}

\usepackage{mathtools}
\mathtoolsset{showonlyrefs}

\usepackage{amscd}
\usepackage{graphicx}



\theoremstyle{plain}
\newtheorem{theorem}{Theorem}[section]
\newtheorem{proposition}[theorem]{Proposition}

\newtheorem*{theorem*}{Theorem}
\newtheorem*{proposition*}{Proposition}
\newtheorem*{corollary*}{Corollary}
\newtheorem*{lemma*}{Lemma}
\newtheorem*{conjecture*}{Conjecture}

\theoremstyle{definition}
\newtheorem{definition}[theorem]{Definition}

\newtheorem*{definition*}{Definition}
\newtheorem*{example*}{Example}
\newtheorem*{question*}{Question}
\newtheorem*{philosophy*}{Philosophy}

\theoremstyle{remark}
\newtheorem{remark}[theorem]{Remark}

\newtheorem*{remark*}{Remark}



\def\gal{\mathrm{Gal}}

\def\au{\underline{\mathrm{Aut}}^\otimes}
\def\gal{\mathrm{Gal}}
\def\Kb{\overline{K}}

\def\gdr{\gal^{a}}

\def\spa{\mathrm{Sp}}

\def\M{\mathcal{M}}
\def\m{\mathrm{Mod}_R}

\def\ve{\mathrm{Vec}}

\def\sp{\mathrm{Spec}}
\def\M{\cM^0}

\def\Kb{\kb}

\def\Ke{(\sp K)_{\acute{e}t}}

\def\sp{\mathrm{Spec}\,}

\def\PP{p}
\def\QQ{q}
\def\Kb{K^s}

\def\He{H_{\acute{e}t}^0}
\newcommand{\colim}{\operatornamewithlimits{colim}}

\def\m#1{h(#1)}

\begin{document}
\title{A choice-free absolute Galois group and Artin motives}
\author{Julian Rosen}
\date{\today}
\maketitle

\begin{abstract}
Proofs that an arbitrary field has a separable closure are necessarily non-constructive, and separable closures are unique only up to non-canonical isomorphism. This means that the absolute Galois group of a field is defined only up to inner automorphism. Here we construct a profinite algebraic group which is an inner form of the absolute Galois group. Our construction uses no form of the axiom of choice, and the group is defined up to canonical isomorphism. We also show that the Frobenius associated with a prime of a number field unramified in an extension, which is classically defined only up to conjugation, has a uniquely-defined analogue in terms of our group. We give a construction of the category of Artin motives with coefficients in an arbitrary field, and we give an interpretation of our absolute Galois group in terms of this category.
\end{abstract}

\section{Introduction}

Let $L/K$ be a finite Galois field extension. Suppose $G$ is a finite group equipped with an action of $\gal(L/K)$ by group automorphisms, and write $\underline{G}_L$ for $G$ viewed as a constant group scheme over $L$. Then $\gal(L/K)$ acts semilinearly on $\underline{G}_L$, and the quotient $\underline{G}_L/{\gal(L/K)}$ is a finite algebraic group over $K$ whose base change to $L$ is $\underline{G}_L$ (\cite{Ser13}, \S III.1.3). 

\begin{definition}
The \emph{algebraic Galois group of $L/K$} is the finite algebraic group over $K$ given by
\[
\gdr(L/K):=\underline{\gal(L/K)}_L/{\gal(L/K)},
\]
where the action of $\gal(L/K)$ on itself is by conjugation.
\end{definition}

\noindent 
The group of $L$-rational points $\gdr(L/K)(L)$ is identified with $\gal(L/K)$. An explicit description of the coordinate ring of $\gdr(L/K)$ is given by Proposition \ref{propcr} below.

A morphism $\varphi:L_1\to L_2$ of finite Galois extensions of $K$ induces a restriction map $\varphi^*:\gdr(L_2/K)\to\gdr(L_1/K)$. We use these restriction maps to define the algebraic absolute Galois group as a categorical limit.

\begin{definition}
\label{defagg}
The \emph{algebraic absolute Galois group} of $K$ is the profinite algebraic group over $K$ defined by
\begin{equation}
\label{eqagg}
\gdr(K):=\lim_{\substack{L/K\\\text{fin.\ Gal.}}}\gdr(L/K),
\end{equation}
where the limit is taken over the essentially small category of finite Galois extensions of $K$.
\end{definition}

Definition \ref{defagg} does not invoke any form of the Axiom of Choice. By contrast, the usal constructions of separable closure \emph{do} use Choice. While Banaschewski \cite{Ban92} has constructed a separable closure using only the Boolean Ultrafilter Theorem (which is strictly weaker than Choice), Pincus \cite{Pin72} has shown that it is consistent with ZF that there is a field with no separable closure.

\begin{remark}
If $\gdr(L/K)$ is replaced by $\gal(L/K)$ on the right hand side of \eqref{eqagg}, we do not recover the usual absolute Galois group but rather its center, which for many fields of interest is trivial. 
\end{remark}

\subsection{Results}
The following result shows that $\gdr(K)$ is an algebraic version of the ordinary absolute Galois group.
\begin{restatable}{Theorem}{thagg}
\label{thagg}
For each separable closure $\Kb$ of $K$, there is a distinguished isomorphism 
\begin{equation}
\label{eqiso}
\gal(\Kb/K)\xrightarrow{\sim}\gdr(K)(\Kb).
\end{equation}
The isomorphism \eqref{eqiso} is functorial in $\Kb$, in the sense that if $\hat{\Kb}$ is another separable closure and $\varphi:\Kb\to\hat{\Kb}$ is a morphism, then the following diagram commutes:
\begin{equation}
\label{eqCD}
\begin{CD}
\gal(\Kb/K)@>\sim>>\gdr(K)(\Kb) \\
@V{\sigma\mapsto \varphi\sigma\varphi^{-1}}VV @VV\varphi V\\
\gal(\hat{\Kb}/K) @>\sim>>\gdr(K)(\hat{\Kb}) .
\end{CD}
\end{equation}
\end{restatable}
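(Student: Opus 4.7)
The plan is to describe $\gdr(L/K)(\Kb)$ concretely as a set of equivariant functions, build a natural bijection with $\gal(\Kb/K)$ at the finite level, and pass to the limit.

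First I would unpack the descent presentation of $\gdr(L/K)$: for any $K$-algebra $R$, an $R$-point is a $\gal(L/K)$-equivariant map from $\mathrm{Spec}(R \otimes_K L)$ to the finite set $\gal(L/K)$, with the Galois group acting through $L$ on the source and by conjugation on the target. Taking $R = \Kb$ and using the splitting $\Kb \otimes_K L \cong \prod_{\sigma \in \mathrm{Emb}_K(L,\Kb)} \Kb$, one obtains
$$\gdr(L/K)(\Kb) \cong \{f : \mathrm{Emb}_K(L,\Kb) \to \gal(L/K) \mid f \text{ is } \gal(L/K)\text{-equivariant}\}.$$

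Next I would define $\Phi_L : \gal(\Kb/K) \to \gdr(L/K)(\Kb)$ by $\Phi_L(\tau)(\sigma) := \sigma^{-1} \tau \sigma$; this is the unique element of $\gal(L/K)$ satisfying $\tau \circ \sigma = \sigma \circ \Phi_L(\tau)(\sigma)$, and it exists because $\sigma(L) \subset \Kb$ is stable under $\tau$ (as $L/K$ is Galois). Direct checks show that $\Phi_L$ is a group homomorphism with equivariant image and that the $\Phi_L$ commute with the restriction maps $\gdr(L_2/K) \to \gdr(L_1/K)$ for $L_1 \subset L_2$; by the universal property of the limit they assemble into $\Phi : \gal(\Kb/K) \to \gdr(K)(\Kb)$.

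The main remaining work is bijectivity. Injectivity is clear: if $\Phi(\tau) = 1$ then $\tau$ restricts to the identity on every finite Galois subextension of $\Kb/K$, hence $\tau = \mathrm{id}$. Surjectivity is the main obstacle. Given a compatible family $(f_L)$, I would define $\tau$ on each $\sigma(L)$ by $\tau(\sigma(x)) := \sigma(f_L(\sigma)(x))$. Equivariance of $f_L$ makes this depend only on $\sigma(x) \in \Kb$ and not on the particular pair $(\sigma, x)$; compatibility of $(f_L)$ under restriction makes the definition consistent as $L$ varies; and since $\Kb$ is the union of the $\sigma(L)$, the local definitions---each a $K$-algebra embedding with image $\sigma(L)$---glue to a $K$-automorphism $\tau$ of $\Kb$.

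Finally, functoriality in $\Kb$ is a direct calculation: an isomorphism $\varphi : \Kb \to \hat{\Kb}$ induces the bijection $\mathrm{Emb}_K(L,\Kb) \to \mathrm{Emb}_K(L,\hat{\Kb})$, $\sigma \mapsto \varphi \circ \sigma$, through which $\gdr(L/K)(\varphi)$ acts on equivariant functions by pullback. The identity $(\varphi\sigma)^{-1}(\varphi\tau\varphi^{-1})(\varphi\sigma) = \sigma^{-1}\tau\sigma$ then yields the commutativity of \eqref{eqCD} on each $\gdr(L/K)$-factor, and hence on the limit $\gdr(K)(\Kb)$.
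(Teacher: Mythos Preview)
Your argument is correct and constructs the same isomorphism as the paper (your $\Phi_L(\tau)(\sigma)=\sigma^{-1}\tau\sigma$ is exactly the dual of the paper's coordinate-ring map $f\mapsto\psi\circ f(\sigma|_{\psi,L})$), and the functoriality check is the same computation. The one genuine difference is in how bijectivity is obtained. The paper invokes Theorem~\ref{thuniq} to replace the categorical limit over \emph{all} finite Galois $L/K$ by the ordinary inverse limit over finite Galois subfields $\tilde L\subset\Kb$; since $\gdr(\tilde L/K)(\Kb)=\gal(\tilde L/K)$ for $\tilde L\subset\Kb$, the identification $\gdr(K)(\Kb)=\gal(\Kb/K)$ drops out immediately. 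You instead stay with the full indexing category and prove surjectivity by hand, gluing a compatible family $(f_L)$ into an automorphism of $\Kb$. Your route is more self-contained but longer; the paper's is slicker but leans on Theorem~\ref{thuniq}. One small point to tighten: when you say the $\Phi_L$ are compatible ``for $L_1\subset L_2$'', note that mapping into the categorical limit requires compatibility with \emph{every} morphism $L_1\to L_2$, not just inclusions---this follows either from Theorem~\ref{thuniq} or from a direct check identical to your functoriality computation.
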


The main tool in the proof of Theorem \ref{thagg} is the following result, which is interesting in its own right.
\begin{restatable}{Theorem}{thuniq}
\label{thuniq}
If $\varphi$, $\psi: L_1\to L_2$ are homomorphisms of finite Galois extensions of $K$, then $\varphi^*=\psi^*:\gdr(L_2/K)\to\gdr(L_1/K)$. In particular, if $L_1$ and $L_2$ are non-canonically isomorphic, then $\gdr(L_1/K)$ and $\gdr(L_2/K)$ are canonically isomorphic.
\end{restatable}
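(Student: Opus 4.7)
The plan is to reduce the claim to an explicit calculation on coordinate rings. Because $L_1/K$ is Galois and hence normal, any two $K$-embeddings $\varphi,\psi\colon L_1\to L_2$ have the same image, so $\eta:=\varphi^{-1}\psi$ defines an element of $\gal(L_1/K)$, yielding $\psi=\varphi\eta$. Writing $G_i:=\gal(L_i/K)$ and $\tilde\varphi(\sigma):=\varphi^{-1}\sigma\varphi$ for the induced restriction $G_2\to G_1$, the relation $\psi=\varphi\eta$ gives
\[
\tilde\psi(\sigma)=\eta^{-1}\tilde\varphi(\sigma)\eta,
\]
so the two restriction maps $G_2\to G_1$ differ by the inner automorphism $\mathrm{int}(\eta^{-1})$ of $G_1$.

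Next I would pass to coordinate rings using Proposition \ref{propcr}, which identifies $\mathcal{O}(\gdr(L/K))$ with the $G$-invariants in $\mathrm{Map}(G,L)=L^G$, where $G=\gal(L/K)$ acts by $(\tau\cdot f)(g)=\tau\bigl(f(\tau^{-1}g\tau)\bigr)$. Under this identification, $\varphi^*$ on functions is given by $(\varphi^* f)(\sigma)=\varphi\bigl(f(\tilde\varphi(\sigma))\bigr)$; one checks via the intertwining relation $\tau\varphi=\varphi\tilde\varphi(\tau)$ that this indeed lands in the $G_2$-invariants. A direct computation then yields
\[
(\psi^* f)(\sigma)=(\varphi\eta)\bigl(f(\eta^{-1}\tilde\varphi(\sigma)\eta)\bigr),
\]
and applying the $G_1$-invariance of $f$ — which is exactly the identity $\eta\bigl(f(\eta^{-1}g\eta)\bigr)=f(g)$ — with $g=\tilde\varphi(\sigma)$ collapses the right-hand side to $\varphi\bigl(f(\tilde\varphi(\sigma))\bigr)=(\varphi^* f)(\sigma)$. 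Hence $\psi^*=\varphi^*$.

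The ``in particular'' clause then follows: given any $K$-isomorphism $\varphi\colon L_1\to L_2$, the resulting $\varphi^*$ is independent of the choice by what we just proved, and is an isomorphism whose inverse comes from the inverse $K$-isomorphism $\varphi^{-1}$. The only real subtlety — and the main place one must be careful — is the bookkeeping of left/right and the semilinear Galois action: the entire point of dividing by the conjugation action in the definition of $\gdr(L/K)$ is precisely to absorb the inner-automorphism ambiguity coming from the non-canonicity of embeddings of Galois extensions, so once the formulas for $\varphi^*$ are set up correctly the cancellation is forced.
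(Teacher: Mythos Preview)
Your proposal is correct and follows essentially the same approach as the paper: both pass to the coordinate ring $A(L/K)$ of Proposition~\ref{propcr}, write $\psi=\varphi\eta$ for some $\eta\in\gal(L_1/K)$, compute the induced map on functions, and collapse the difference using the defining invariance condition $\eta\bigl(f(\eta^{-1}g\eta)\bigr)=f(g)$. The only discrepancy is notational---the paper writes $\varphi_*$ for the coordinate-ring map and $\tau|_{\varphi,L_1}$ for your $\tilde\varphi(\tau)$---and your version is slightly more explicit about why $\varphi^* f$ lands in the $G_2$-invariants.
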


\noindent For example, while the Galois group of a separable polynomial is defined only up inner automorphism, the algebraic Galois group of a polynomial is defined up to canonical isomorphism.

Now let $L/K$ be a Galois extension of number fields. Let $\PP$ be a prime of $K$ unramified in $L$, and $\QQ$ a prime of $L$ over $K$. The Frobenius automorphism is an element $\phi_\QQ\in\gal(L/K)$. If $\QQ'$ is another prime of $L$ over $\PP$, then $\phi_{\QQ'}$ is conjugate to $\phi_{\QQ}$. So if we wish to talk about a Frobenius associated with $\PP$, we must consider a conjugacy class in $\gal(L/K)$. The algebraic analogue of the Frobenius is better behaved.
\begin{restatable}{Theorem}{thnf}
\label{thnf}
Let $L/K$ be a Galois extension of number fields, $\PP$ a finite or infinite place of $K$ unramified in $L$, and write $K_\PP$ for the completion of $K$ at $\PP$. Then there is a unique element
\[
\phi_p^a\in\gdr(L/K)(K_\PP)
\]
with the property that for every place $\QQ$ of $L$ over $\PP$, the image of $\phi_\PP^a$ in $\gdr(L/K)(K_\PP)\hookrightarrow\gdr(L/K)(L_\QQ)$ is equal to the image of $\phi_\QQ$ in $\gal(L/K)=\gdr(L/K)(L)\hookrightarrow\gdr(L/K)(L_\QQ)$.
\end{restatable}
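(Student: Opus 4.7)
My approach is to use the Galois descent description of $\gdr(L/K)$ to reduce the statement to the classical conjugation relation among Frobenius elements at primes of $L$ above $\PP$. For any $K$-algebra $A$, the quotient construction gives
$$\gdr(L/K)(A) = \underline{\gal(L/K)}(A \otimes_K L)^{\gal(L/K)},$$
and for $A = K_\PP$ the decomposition $K_\PP \otimes_K L \cong \prod_{\QQ \mid \PP} L_\QQ$ identifies $\gdr(L/K)(K_\PP)$ with the set of tuples $(g_\QQ)_{\QQ \mid \PP}$ in $\gal(L/K)$ satisfying the equivariance $g_{\tau\QQ} = \tau g_\QQ \tau^{-1}$ for every $\tau \in \gal(L/K)$. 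The classical identity $\phi_{\tau\QQ} = \tau \phi_\QQ \tau^{-1}$ says exactly that the Frobenius tuple $(\phi_\QQ)_{\QQ \mid \PP}$ defines such an element, and this is my candidate for $\phi_\PP^a$.

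To verify the compatibility claim, I would carry out the analogous descent computation for $\gdr(L/K)(L_\QQ)$ using $L_\QQ \otimes_K L \cong \prod_{\sigma \in \gal(L/K)} L_\QQ$, in which the $\sigma$-factor corresponds to the $K$-embedding $L \to L_\QQ$ given by the completion map composed with $\sigma$. The key observation is that this embedding factors through $L_{\sigma^{-1}\QQ}$ via the Galois extension $\sigma : L_{\sigma^{-1}\QQ} \sto L_\QQ$, so that the map $\gdr(L/K)(K_\PP) \to \gdr(L/K)(L_\QQ)$ sends a tuple $(g_{\QQ'})$ to the tuple with $\sigma$-component $g_{\sigma^{-1}\QQ}$, while the map $\gdr(L/K)(L) \to \gdr(L/K)(L_\QQ)$ sends an element $g \in \gal(L/K)$ (which corresponds to the descent tuple $(\sigma^{-1} g \sigma)_\sigma$) to the tuple with $\sigma$-component $\sigma^{-1} g \sigma$. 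Applied with $g = \phi_\QQ$, both outputs become $(\phi_{\sigma^{-1}\QQ})_\sigma$ by one more application of the conjugation formula, and the compatibility condition is verified.

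Uniqueness is then immediate: if any $(g_{\QQ'}) \in \gdr(L/K)(K_\PP)$ satisfies the compatibility condition for a single $\QQ$, the equality of images forces $g_{\sigma^{-1}\QQ} = \phi_{\sigma^{-1}\QQ}$ for every $\sigma \in \gal(L/K)$, and transitivity of the Galois action on primes of $L$ above $\PP$ pins down every entry of the tuple. The main obstacle I anticipate is bookkeeping: keeping the various left/right conventions in the descent formalism straight, in particular tracking the precise way $\gal(L/K)$ permutes the factors of $\prod_{\QQ \mid \PP} L_\QQ$ and of $\prod_\sigma L_\QQ$, closely enough that the classical Frobenius identity emerges with the correct sign rather than its inverse.
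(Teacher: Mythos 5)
Your proof is correct in outline, but it takes a genuinely different route from the paper. The paper works directly with the explicit coordinate ring $A(L/K)$ from Proposition~\ref{propcr}: it picks one place $\QQ$ above $\PP$, defines the $K$-algebra homomorphism $A(L/K)\to L_\QQ$ by $f\mapsto f(\phi_\QQ)$, shows the image lands in $K_\PP$ because the equivariance identity $\sigma f(\sigma^{-1}\tau\sigma)=f(\tau)$ with $\sigma=\tau=\phi_\QQ$ makes $f(\phi_\QQ)$ fixed by the decomposition group $\gal(L_\QQ/K_\PP)$ generated by $\phi_\QQ$, and then verifies independence of $\QQ$ by a direct change-of-prime computation. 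You instead invoke the Galois-descent description of $R$-points of the quotient, $\gdr(L/K)(A)=\underline{\gal(L/K)}(A\otimes_K L)^{\gal(L/K)}$, decompose $K_\PP\otimes_K L$ and $L_\QQ\otimes_K L$ into products of completions, and read off the answer as the equivariant Frobenius tuple $(\phi_{\QQ'})_{\QQ'\mid\PP}$. Your approach is more structural: it packages all primes over $\PP$ at once, so independence of $\QQ$ is built in and uniqueness is forced from a single $\QQ$ by the transitivity of $\gal(L/K)$ on the tuple's index set, whereas the paper must argue independence separately. The paper's approach is more elementary and self-contained, avoiding the need to unwind the descent formalism for $R$-points, and avoiding exactly the left/right bookkeeping you correctly flag as the delicate part of your route (the permutation action on factors of $K_\PP\otimes_K L$ and the identification $\gal(L/K)\cong\gdr(L/K)(L)$ as a tuple both require fixing conventions consistently). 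Both arguments are sound; you should finish by pinning down those conventions explicitly so the identity $\sigma^{-1}\phi_\QQ\sigma=\phi_{\sigma^{-1}\QQ}$ lands with the correct orientation rather than its inverse.
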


\subsection{Artin motives} The algebraic Galois group has an interpretation in terms of Artin motives. For each field $K$, there is an abelian tensor category $\M_{K,K}$, the category of Artin motives over $K$ with coefficients in $K$. We give a construction of $\M_{K,K}$ in Sec.\ \ref{secam}. Each scheme $X$ finite \'etale over $\sp K$ determines an object $\m{X}\in\M_K$, the motive of $X$. The category $\M_{K,K}$ is neutral Tannakian, which means that if $\omega$ is a fibre functor on $\M_{K,K}$ (that is, $\omega$ is an exact $K$-linear tensor functor from $\M_{K,K}$ to finite dimensional $K$-vector spaces), then $\omega$ induces a tensor equivalence of $\M_{K,K}$ with the category of finite dimensional representations of an affine group scheme $\au(\omega)$. If $\Kb$ is a separable closure of $K$, there is a fibre functor $\omega_{\Kb}$ taking $\m{X}$ to the \'etale cohomology group
\[
\He(X_{\Kb},K)=K^{X(\Kb)}.
\] The automorphism group of $\omega_{\Kb}$ is $\gal(\Kb/K)$ (viewed as a constant profinite group scheme over $K$), so $\M_{K,K}$ is equivalent to the category of finite-dimensional continuous $\gal(\Kb/K)$-representations on discrete $K$-vector spaces.

To construct the functor $\omega_{\Kb}$ one must choose a separable closure of $K$. There is another fibre functor  on $\M_{K,K}$ whose construction does not require choice, the de Rham realization $\omega_{dR}$, which satisfies
\[
\omega_{dR}(\m{X})=H^0_{dR}(X):=\Gamma(X,\cO_X).
\]
We prove the following result.
\begin{restatable}{Theorem}{tham}
\label{tham}
The automorphism group $\au(\omega_{dR})$ is isomorphic to $\gdr(K)$, so $\omega_{dR}$ induces an equivalence of $\M_{K,K}$ with the category of finite-dimensional algebraic representations of $\gdr(K)$.
\end{restatable}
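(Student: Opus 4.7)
The plan is to identify $\au(\omega_{dR})$ with $\gdr(K)$ via a Galois-descent computation performed compatibly over the filtration of $\M_{K,K}$ by finite Galois extensions. Since the fibre functor $\omega_\Kb$ already shows $\M_{K,K}$ is neutral Tannakian, and $\omega_{dR}$ is an exact $K$-linear tensor functor to finite-dimensional $K$-vector spaces, standard Tannakian duality will supply the equivalence $\M_{K,K} \simeq \mathrm{Rep}_K(\au(\omega_{dR}))$ as soon as we produce an isomorphism of affine $K$-group schemes $\au(\omega_{dR}) \cong \gdr(K)$. Both sides carry compatible filtrations indexed by finite Galois extensions $L/K$: on the right, $\gdr(K) = \lim_L \gdr(L/K)$ by Definition \ref{defagg}; on the left, let $\M^{(L)} \subset \M_{K,K}$ denote the full Tannakian subcategory generated by the motives $\m{X}$ for $K$-schemes $X$ finite \'etale and split by $L$. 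Then $\M_{K,K} = \colim_L \M^{(L)}$ and correspondingly $\au(\omega_{dR}) = \lim_L \au(\omega_{dR}|_{\M^{(L)}})$, so it suffices to construct isomorphisms $\au(\omega_{dR}|_{\M^{(L)}}) \cong \gdr(L/K)$ for each finite Galois $L/K$, compatibly with restriction.

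For a fixed $L$, the key step is a descent computation over $L$. The canonical identification $\mathcal{O}(X) \otimes_K L \cong L^{X(L)}$ (valid for $X$ split by $L$) identifies $\omega_{dR}|_{\M^{(L)}} \otimes_K L$ with the geometric fibre functor $\omega_L \colon \m{X} \mapsto L^{X(L)}$ over $L$, whose automorphism group as an $L$-group scheme is the constant group $\underline{\gal(L/K)}_L$, where $\sigma \in \gal(L/K)$ acts on $L^{X(L)}$ via the left-composition action on $X(L) = \mathrm{Hom}_K(\mathcal{O}(X), L)$. Hence $\au(\omega_{dR}|_{\M^{(L)}})$ is an $L/K$-twisted form of $\underline{\gal(L/K)}_L$. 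To identify this form with $\gdr(L/K)$ it suffices to check that the descent action of $\gal(L/K)$ is conjugation: an element $\tau \in \gal(L/K)$ acts on $\omega_{dR} \otimes_K L$ via $1 \otimes \tau$, which under the identification above corresponds to the semilinear operator $(T_\tau f)(x) = \tau\bigl(f(\tau^{-1} \circ x)\bigr)$ on $L^{X(L)}$; conjugating the $L$-linear permutations associated to $\sigma \in \gal(L/K)$ by $T_\tau$ yields $\sigma \mapsto \tau\sigma\tau^{-1}$, precisely the descent datum defining $\gdr(L/K)$.

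The main obstacle is the bookkeeping needed to assemble these individual isomorphisms into a morphism of cofiltered limits: for each inclusion $L_1 \subseteq L_2$ of finite Galois extensions, the restriction map on Tannakian duals induced by $\M^{(L_1)} \hookrightarrow \M^{(L_2)}$ must match the restriction map $\gdr(L_2/K) \to \gdr(L_1/K)$ from the discussion preceding Definition \ref{defagg}. This is a routine diagram chase within the framework above, simplified by Theorem \ref{thuniq}, which ensures the $\gdr$-side restriction maps do not depend on the choice of embedding $L_1 \hookrightarrow L_2$. As a sanity check, evaluating on $\Kb$-points for any separable closure $\Kb$ collapses both sides to $\gal(\Kb/K)$ in a manner consistent with Theorem \ref{thagg}.
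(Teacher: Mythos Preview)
Your argument is correct, but it follows a different route from the paper's. The paper works directly at the level of a chosen separable closure $\Kb$: it first rewrites $\omega_{dR}(\cF)=(\omega_{\Kb}(\cF)\otimes\Kb)^{\gal(\Kb/K)}$, then invokes Deligne's classification of fibre functors by torsors (\cite{Del82}, Theorem~3.2) to identify $\omega_{dR}$ with the $\gal(\Kb/K)$-torsor $\Kb$, whose associated cocycle is the identity; from this it reads off that $\au(\omega_{dR})$ is the inner twist $\underline{\gal(\Kb/K)}_{\Kb}/\gal(\Kb/K)\cong\gdr(K)$. By contrast, you never appeal to the torsor formalism: you filter $\M_{K,K}$ by the subcategories $\M^{(L)}$, compute the descent datum on $\au(\omega_{dR}|_{\M^{(L)}})\times_K L\cong\underline{\gal(L/K)}_L$ by hand to be conjugation, identify the result with $\gdr(L/K)$, and then pass to the limit. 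Your approach is more elementary and keeps the compatibility with Definition~\ref{defagg} visible throughout, at the cost of the extra bookkeeping you flag (checking that the Tannakian restriction maps match the $\gdr$-restriction maps, which indeed follows from Theorem~\ref{thuniq}). The paper's approach is shorter and more conceptual but imports more machinery. Either way the substance is the same Galois-descent computation, carried out globally in the paper and level-by-level in your version.
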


\section{Coordinate ring}
We begin with a description of the coordinate ring of $\gdr(L/K)$.

\begin{proposition}
\label{propcr}
Let $L/K$ be a finite Galois extension. Then the coordinate ring of $\gdr(L/K)$ is isomorphic to the ring
\[
A(L/K):=\left\{\begin{array}{rrr}f:\gal(L/K)\to L&\big|& \sigma f(\sigma^{-1}\tau \sigma)=f(\tau)\\&\big|&\forall\sigma,\tau\in\gal(L/K)\end{array}\right\}.
\]
\end{proposition}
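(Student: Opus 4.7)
Write $G=\gal(L/K)$. The plan is to write down the coordinate ring of $\underline{G}_L$ together with its semilinear $G$-action, then apply the Galois descent principle (\cite{Ser13}, \S III.1.3) that was already invoked to define $\gdr(L/K)$: namely, the coordinate ring of the quotient $\underline{G}_L/G$ is the ring of $G$-invariants in the coordinate ring of $\underline{G}_L$.

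First, since $\underline{G}_L=\coprod_{\tau\in G}\sp L$, its coordinate ring is canonically the ring $L^G$ of all set-theoretic maps $f:G\to L$ (product ring structure). Second, I would compute the induced semilinear action of $\sigma\in G$ on $L^G$. The action of $\sigma$ on $\underline{G}_L$ is by conjugation on the index set, $\tau\mapsto\sigma\tau\sigma^{-1}$, combined with $\sigma:L\to L$ on the scalars; this sends the $\tau$-component $\sp L$ to the $\sigma\tau\sigma^{-1}$-component via $\sigma$. Pulling back functions, $\sigma$ acts on $L^G$ by
\[
(\sigma\cdot f)(\tau)=\sigma\bigl(f(\sigma^{-1}\tau\sigma)\bigr).
\]

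Third, the invariants are exactly those $f$ satisfying $\sigma(f(\sigma^{-1}\tau\sigma))=f(\tau)$ for all $\sigma,\tau\in G$, which is precisely the defining condition for $A(L/K)$. Hence by descent the coordinate ring of $\gdr(L/K)=\underline{G}_L/G$ is $(L^G)^G=A(L/K)$, giving the desired isomorphism.

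The only real care is bookkeeping in step two: one must check that conjugation $\tau\mapsto\sigma\tau\sigma^{-1}$ on the index set (rather than $\tau\mapsto\sigma^{-1}\tau\sigma$) is what descends from the stated definition of $\gdr(L/K)$, and verify that the $\sigma^{-1}$ lands inside $f$ (not outside) after pulling back along the semilinear action. This is the only place where a sign could go wrong, and it is the main—but mild—obstacle; everything else is a direct application of the cited descent statement. I would not need to verify abstractly that $A(L/K)$ inherits a Hopf algebra structure, since descent automatically transports the Hopf algebra structure on $L^G$ to its $G$-invariants.
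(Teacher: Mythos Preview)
Your proposal is correct and follows essentially the same approach as the paper: identify the coordinate ring of $\underline{G}_L$ with set maps $G\to L$, and then take $G$-invariants for the semilinear conjugation action to obtain the coordinate ring of the quotient. The paper's proof is terser (it omits the explicit computation of the action and the invariant condition that you spell out in step two), but the underlying argument is identical.
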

\begin{proof}
More generally, if $X$ is an affine variety acted on by a finite group $G$, the coordinate ring of $X/G$ is the set of $G$-invariant elements in the coordinate ring of $X$. In our case, the coordinate ring of $\underline{\gal(L/K)}_L$ is the ring of functions $\gal(L/K)\to L$.
\end{proof}
Now we prove Theorem \ref{thuniq}. We recall the statement here.

\thuniq*

\begin{proof}
Let $L_1/K$ and $L_2/K$ be Galois extensions, $\varphi:L_1\to L_2$ a map of extensions. First we construct $\varphi^*:\gdr(L_2/K)\to\gdr(L_1/K)$, or equivalently a morphism of coordinate rings $\varphi_*:A(L_1/K)\to A(L_2/K)$. Given $f\in A(L_1/K)$, we define
\[
\varphi_*f(\tau) =\varphi\circ f\lp \tau\big|_{\varphi,L_1}\rp.
\]
Here $\tau\big|_{\varphi,L_1}$ denotes the restriction of $\tau$ to $L_1$ via $\varphi$. One immediately checks that $\varphi_*f\in A(L_2/K)$, and that $\varphi\mapsto\varphi_*$ is functorial.

Now let $\psi:L_1\to L_2$ be another map of extensions. There is an element $\sigma\in\gal(L_1/K)$ such that $\varphi\circ\sigma=\psi$. Then for arbitrary $f\in\gal(L_1/K)$ and $\tau\in\gal(L_2/K)$:
\begin{align*}
\psi_*f(\tau)&=\psi\circ f\lp \tau\big|_{\psi,L_1}\rp\\
&=\varphi\circ\sigma\circ f\lp \tau\big|_{\varphi\sigma,L_1}\rp\\
&=\varphi\circ\sigma\circ f\lp \sigma^{-1}\tau\big|_{\varphi,L_1}\sigma\rp\\
&=\varphi\circ f\lp \tau\big|_{\varphi,L_1}\rp\\
&=\varphi_*f(\tau).
\end{align*}
Thus we have $\varphi_*=\psi_*$.
\end{proof}

\section{Algebraic absolute Galois group}
The (ordinary) absolute Galois group is a projective limit of finite Galois groups. However, the projective limit must be taken over finite subextensions of some fixed separable closure:
\[
\gal(\Kb/K)=\varprojlim_{\substack{L/K\text{ fin.\ Gal.}\\L\subset \Kb}}\gal(L/K).
\]
If the condition $L\subset\Kb$ is dropped and we instead take a categorical limit over all finite Galois extensions $L/K$, we obtain instead the center of $\gal(\Kb/K)$, which in many cases is trivial e.g.\ if $K$ is a number field (\cite{Win08}, Corollary 12.1.6).

Now we prove Theorem \ref{thagg}. We recall the statement here.
\thagg*
\begin{proof}
Let $\Kb$ be a separable closure of $K$. For each finite Galois extension $L/K$, let $\tilde{L}$ be the image of $L$ under one (equivalently, every) embedding $L\hookrightarrow\Kb$. Then Theorem \ref{thuniq} implies $\gdr(L/K)$ and $\gdr(\tilde{L}/K)$ are isomorphic, and this isomorphism is functorial in $L$. Now
\begin{align*}
\gdr(K)(\Kb)&=\lim_L\gdr(L/K)(\Kb)\\
&=\varprojlim_{\tilde{L}\subset\Kb}\gdr(\tilde{L}/K)(\Kb)\\
&=\varprojlim_{\tilde{L}\subset\Kb}\gal(\tilde{L}/K)\\
&=\gal(\Kb/K).
\end{align*}
We can describe this identification on the level of coordinate rings. Given $\sigma\in\gal(\Kb/K)$, we get an algebra homomorphism
\begin{align*}
g_{\sigma}:\Gamma(\cO_{\gdr(K)})=\colim_L A(L/K)\to \Kb,
\end{align*}
which for fixed $L$, takes $f\in A(L/K)$ to $\psi\circ f(\sigma\big|_{\psi,L})$, where $\psi:L\to \Kb$ is any embedding (one checks that the result is independent of $\psi$).

Now, let $\hat{\Kb}$ be another separable closure of $K$ and $\varphi:\Kb\to\hat{\Kb}$. Fix $\sigma\in\gal(\Kb/K)$, a finite Galois extension $L/K$, and an embedding $\psi:L\to\Kb$. Then $\varphi\circ\psi$ is an embedding $L\to\hat{\Kb}$. We get two elements of $\gdr(K)(\hat{\Kb})$, and two show that \eqref{eqCD} commutes we need to show these two elements are equal. In \eqref{eqCD}, mapping down first then to the right gives
\begin{align*}
g_{\varphi\sigma\varphi^{-1}}:\Gamma(\cO_{\gdr(K)})&\to\hat{\Kb},\\
f\in A(L/K)&\mapsto \varphi\circ\psi\circ f(\varphi\sigma\varphi^{-1}\big|_{\varphi\psi,L}).
\end{align*}
Mapping to the right first then down gives
\begin{align*}
\varphi\circ g_{\sigma}:\Gamma(\cO_{\gdr(K)})&\to\hat{\Kb},\\
f\in A(L/K)&\mapsto \varphi\circ\psi\circ f(\sigma\big|_{\psi,L}).
\end{align*}
This completes the proof, as
\[
\varphi\sigma\varphi^{-1}\big|_{\varphi\psi,L}=\sigma\big|_{\psi,L}.
\]
\end{proof}

\section{Number fields}
In this section we prove Theorem \ref{thnf}. We recall the statement here.

\thnf*

\begin{proof}
Choose a place $\QQ$ of $L$ over $\PP$. We define a $K$-algebra homomorphism
\begin{align*}
g_\QQ:A(L/K)&\to L_\QQ,\\
f&\mapsto f(\phi_{\QQ}).
\end{align*}
The Frobenius $\phi_{\QQ}$ extends uniquely to a generator the cyclic group $\gal(L_\QQ,K_\PP)$, and we have
\begin{align*}
\phi_{\QQ}\circ f(\phi_{\QQ})&=f\big(\phi_{\QQ}\phi_{\QQ}\phi_{\QQ}^{-1}\big)\\
&=f(\phi_{\QQ}).
\end{align*}
It follows that $f(\phi_{\QQ})\in K_p$, so the the image of $g_{\QQ}$ is contained in $K_\PP$. Write $i_\PP$ for the inclusion $K_\PP\hookrightarrow L_\QQ$. Then $\phi^a_\QQ:=i_\QQ^{-1}\circ f_{\QQ}$ is the unique element of $\gdr(L/K)(K_\PP)$ satisfying the conclusion of the theorem for $\QQ$.

To complete the proof of the theorem, we will show that if $\QQ'$ is another place of $L$ over $\PP$, then $\phi^a_{\QQ'}=\phi^a_{\QQ}$. We can then define $\phi^a_{\PP}:=\phi^a_{\QQ}$, which depends only on $\PP$. There is some $\sigma\in\gal(L/K)$ such that $\QQ'=\QQ^\sigma$. Then $\phi_{\QQ'}=\sigma\phi_\QQ\sigma^{-1}$. Additionally, $\sigma$ induces an isomorphism $\bar{\sigma}:L_\QQ\to L_{\QQ'}$, and $\bar{\sigma}\circ i_\QQ=i_{\QQ'}$. We have
\begin{align*}
\phi^a_{\QQ'}(f)&=i_{\QQ'}^{-1}\circ f(\phi_{\QQ'})\\
&=i_\QQ^{-1}\circ\bar{\sigma}^{-1}\circ f\big(\sigma\phi_\QQ\sigma^{-1}\big)\\
&=i_\QQ^{-1}\circ f(\phi_\QQ)=\phi^a_\QQ(f).
\end{align*}
This competes the proof.
\end{proof}

\section{Artin motives}
\label{secam}

\noindent Fix a base field $K$ and a coefficient field $F$.

\subsection{Construction of the category}
We begin with choice-free construction of the category $\M_{K,F}$ of Artin motives over $K$ with coefficients in $F$. In Sec.\ \ref{ssdr} we use this category to give a another construction of the algebraic absolute Galois group.

Recall that the small \'etale site over $\sp K$, denoted $\Ke$, is the category of finite \'etale covers of $\sp K$ (that is, finite unions of spectra of finite separable extensions of $K$), equipped with a Grothendieck topology such that a family is covering if it is jointly surjective.  If $\cF$ is a sheaf on $\Ke$, we write $\cF(L)$ for $\cF(\sp L)$.

We define $\M_{K,F}$ to be the category of sheaves of $F$-vector spaces on $\Ke$ satisfying a finiteness condition which we now give.

\begin{definition}
\label{deft}
\begin{enumerate}
\item A sheaf $\cF$ of $F$-vector spaces on $\Ke$ is \emph{finite type} if there exists a finite separable extension $L/K$ such that $\cF(L)$ has finite dimension and $\cF(L)\to\cF(L')$ is an isomorphism for every finite separable $L'/L$.
\item The \emph{category of Artin motives over $K$ with coefficients in $F$}, denoted $\M_{K,F}$, is the category of finite type sheaves of $F$-vector spaces on $\Ke$. 
\item For each scheme $X$ finite \'{e}tale over $\sp K$, the \emph{motive of $X$} is the sheaf $\m{X}\in M_{K,F}$ given by
\[
\m{X}(Y):=F^{\spa(X\times Y)},
\]
where $\spa(X\times Y)$ is the underlying set of the scheme $X\times_{\sp K} Y$ (which is a finite discrete set).
\end{enumerate}
\end{definition}

\begin{proposition}
Let $X$ be a finite \'{e}tale scheme over $\sp K$. Then the $h(X)$ defined by Definition \ref{deft}(3) is a finite type sheaf.
\end{proposition}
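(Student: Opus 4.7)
The plan is to verify the two requirements of Definition \ref{deft}(1) for the presheaf $\m{X}$. First I would show that $\m{X}$ is a sheaf of $F$-vector spaces on $\Ke$. The underlying set-valued functor $Y\mapsto\spa(X\times_{\sp K}Y)$ is itself a sheaf of finite sets: this is \'etale descent for the finite \'etale scheme $X\times_K Y$ over $Y$, and for a single Galois cover $\sp L'\to \sp L$ with group $G$ it amounts to the identification of $\spa(X\times_K \sp L)$ with the set of $G$-orbits on $\spa(X\times_K \sp L')$. Since each $\spa(X\times_K Y)$ is finite, the functor $S\mapsto F^S$ carries disjoint unions to products and converts the coequalizer describing a cover into an equalizer, so the sheaf property for $\m{X}$ follows immediately from the sheaf property of $\spa(X\times_K -)$.

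For the finite type condition, write $X=\coprod_{i=1}^n \sp K_i$ with each $K_i/K$ a finite separable extension. I would then construct, without invoking Choice, a finite Galois extension $L/K$ splitting every $K_i$ simultaneously. Choosing a primitive element $\alpha_i$ for each $K_i$, one can form a splitting field for the product of the minimal polynomials of the $\alpha_i$ by iteratively factoring and adjoining roots; the construction is finite and constructive. Over such an $L$, each $K_i\otimes_K L$ is isomorphic to $L^{[K_i:K]}$ as an $L$-algebra, so $\spa(X\times_K \sp L)$ is a finite set of cardinality $\sum_i [K_i:K]$, and hence $\m{X}(L)=F^{\spa(X\times_K \sp L)}$ is finite dimensional.

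Finally, for any further finite separable extension $L'/L$, base-changing the splitting of $X\times_K \sp L$ along $\sp L'\to \sp L$ yields $X\times_K \sp L'\cong\coprod \sp L'$ with the same indexing set as over $L$, so $\spa(X\times_K \sp L)\to\spa(X\times_K \sp L')$ is a bijection and the induced map $\m{X}(L)\to\m{X}(L')$ is an isomorphism of $F$-vector spaces. The main obstacle is the sheaf verification, where the standard descent statement for finite \'etale algebras must be unpacked into the claim about orbits on $\spa$; once that is in hand, the finite-type condition is an immediate consequence of the existence of a common splitting field for finitely many finite separable extensions.
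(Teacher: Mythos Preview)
Your argument is correct and follows essentially the same route as the paper: the sheaf condition is reduced to the orbit identification $\spa(X_{L'})/\gal(L'/L)\cong\spa(X_L)$ for a Galois cover, and the finite-type condition is handled by producing a finite Galois $L/K$ into which every residue field of $X$ embeds. Your write-up is more detailed (in particular the explicit, choice-free construction of the common splitting field and the verification that $S\mapsto F^S$ converts the orbit coequalizer into the sheaf equalizer), but the strategy is identical; one small terminological quibble is that $Y\mapsto\spa(X\times_K Y)$ is covariant on $\Ke$, so it is not literally a ``sheaf of finite sets''---what you actually use, and what the paper states, is precisely the orbit identification above.
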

\begin{proof}
The condition that $\m{X}$ is a sheaf amounts to the identification
\[
\spa(X_{L'})/\gal(L'/L)\cong\spa(X_L)
\]
for $L'/L$ finite Galois, and $\m{X}$ is finite type because in Definition \ref{deft}(1) we may take $L$ to be any finite Galois extension of $K$ into which the residue fields of all points of $X$ embed.
\end{proof}

We consider fibre functors on $\M_{K,F}$, which are exact, $F$-linear tensor functors from $\M_{K,F}$ to $\ve_F$, the category of finite-dimensional $F$-vector spaces. Each fibre functor $\omega$ determines an affine group scheme $\au(\omega)$ over $F$, whose $R$-points (for an $F$-algebra $R$) are the $R$-linear tensor functorial automorphism of the functor
\[
A\mapsto\omega(A)\otimes_F R.
\]
The category $M_{K,F}$ is neutral Tannakian (this is a consequence of Proposition \ref{propequiv} below), which means that each fibre functor $\omega$ induces an equivalence of $\M_{K,F}$ with the category of finite dimensional algebraic representations of $\au(\omega)$. Our fibre functors come from cohomological realizations: if $H^0_\bullet$ is a cohomology theory for $0$-dimensional varieties with coefficients in $F$, the corresponding fibre functor takes $h(X)$ to $H^0_\bullet(X)$.

\subsection{\'{E}tale realization}

\begin{definition}
For $\Kb$ a separable closure of $K$, we define a fibre functor
\begin{align*}
\omega_{\Kb}:\M_{K,F}&\to\ve_F,\\
\cF&\mapsto\varinjlim_{L\subset \Kb}\cF(L),
\end{align*}
where the limit is over subfields of $\Kb$ finite over $K$.
\end{definition}
Because $\cF$ is finite type, we have $\omega_{\Kb}(\cF)=\cF(L)$ for any $L\subset\Kb$ satisfying Definition \ref{deft}(1), so in particular $\omega_{\Kb}(\cF)$ is finite dimensional. Additionally, there is an action of $\gal(\Kb/K)$ on $\omega_{\Kb}(\cF)$ which factors through $\gal(L/K)$ for every $L\subset \Kb$ finite Galois over $K$. Note that while the construction of the category $\M_{K,F}$ did not use the axiom of choice, some form of choice is needed to construct a separable closure $\Kb$, hence also to construct the fibre functor $\omega_{\Kb}$.

The functor $\omega_{\Kb}$ is an \'{e}tale realization, as the following proposition shows.
\begin{proposition}
There is a $\gal(\Kb/K)$-equivariant isomorphism
\[
\omega_{\Kb}(\m{X})\cong \He(X_{\Kb},F),
\]
functorial in $X\in\M_{K,F}$.
\end{proposition}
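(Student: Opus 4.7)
The plan is to unwind both sides of the claimed isomorphism. Write $X = \sp A$ where $A$ is a finite \'etale $K$-algebra, i.e.\ a finite product of finite separable extensions of $K$. By Definition \ref{deft}(3),
\[
\m{X}(\sp L) = F^{\spa(X_L)} = F^{\Hom_{K\text{-alg}}(A,L)},
\]
while $\He(X_{\Kb},F) = F^{X(\Kb)} = F^{\Hom_{K\text{-alg}}(A,\Kb)}$.

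First I will show that for every finite separable $L \subset \Kb$ containing a splitting field for $A$ (i.e.\ one into which every residue field of $X$ embeds), the natural map
\[
\Hom_{K\text{-alg}}(A,L) \longrightarrow \Hom_{K\text{-alg}}(A,\Kb)
\]
given by post-composition with the inclusion $L \hookrightarrow \Kb$ is a bijection: both sets have cardinality $\dim_K A$, and injectivity is clear. Such $L$ are cofinal among finite subextensions of $\Kb/K$, so the transition maps in the colimit defining $\omega_{\Kb}$ are eventually isomorphisms, giving the canonical identification
\[
\omega_{\Kb}(\m{X}) \;=\; \varinjlim_{L\subset \Kb} F^{\spa(X_L)} \;\xrightarrow{\sim}\; F^{X(\Kb)} \;=\; \He(X_{\Kb},F).
\]

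Second, I will verify $\gal(\Kb/K)$-equivariance. The Galois action on $\omega_{\Kb}(\m{X})$ is induced by the restrictions $\sigma|_L \in \gal(L/K)$ for $L \subset \Kb$ finite Galois, acting on $\Hom_K(A,L)$ by post-composition and hence on $F^{\spa(X_L)}$ by permutation of coordinates. Under the bijection above, this matches the action of $\sigma$ on $\Hom_K(A,\Kb) = X(\Kb)$ by post-composition, which is the standard Galois action on $\He(X_{\Kb},F)$.

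Third, I will check naturality. A morphism $X \to X'$ of finite \'etale $K$-schemes corresponds to a $K$-algebra map $A' \to A$, inducing a map of finite sets $\spa(X_L) \to \spa(X'_L)$ by precomposition, and hence by pushforward of functions a map $\m{X}(L) \to \m{X'}(L)$; these assemble to the functorial map $\omega_{\Kb}(\m{X}) \to \omega_{\Kb}(\m{X'})$. Under the identifications established above, this coincides with the pushforward along $X(\Kb) \to X'(\Kb)$, which is the standard functoriality of $\He$. The only step with content is the splitting-field bijection; once that is in hand, equivariance and naturality are formal consequences of matching post-composition with pushforward of functions on finite sets, so I do not expect any real obstacle.
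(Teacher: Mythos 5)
Your proof is correct and follows essentially the same route as the paper: the paper simply asserts $\varinjlim_{L\subset\Kb}F^{\spa(X_L)}=F^{\spa(X_{\Kb})}=F^{X(\Kb)}$ and declares the chain equivariant and functorial, whereas you supply the justification (passing to $\Hom_{K\text{-alg}}(A,-)$, using the splitting-field/cofinality argument to see the colimit stabilizes, and then matching the Galois actions and pushforward maps). The only nitpick is that the first display should be read as holding only for $L$ splitting $A$ (for small $L$, $\spa(X_L)$ and $\Hom_{K\text{-alg}}(A,L)$ differ), but you restrict to such $L$ exactly where it matters, so the argument is sound.
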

\begin{proof}
We compute
\begin{align*}
\omega_{\Kb}(\m{X})&=\varinjlim_{L\subset\Kb}F^{\spa(X_L)}\\
&=F^{\spa(X_{\Kb})}\\
&=F^{X(\Kb)}\\
&=\He(X_{\Kb},F),
\end{align*}
where all equalities are $\gal(\Kb/K)$-equivariant and functorial in $X$. Here $X_{\Kb}$ is the base change of $X$ to $\Kb$.
\end{proof}

\begin{proposition}
\label{propequiv}

The functor $\omega_{\Kb}$ induces an equivalence of tensor categories between $\M_{K,F}$ and the category of finite dimensional discrete $F$-vector spaces equipped with a continuous action of $\gal(\Kb/K)$.
\end{proposition}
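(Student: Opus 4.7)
The plan is to exhibit an explicit quasi-inverse to $\omega_{\Kb}$ via classical Galois descent, and then check that the resulting equivalence respects the tensor structure. Given a finite-dimensional discrete $F$-vector space $V$ with continuous $\gal(\Kb/K)$-action, I would define a presheaf $\Phi(V)$ on $\Ke$ by
\[
\Phi(V)(\sp L') := \mathrm{Hom}_{\gal(\Kb/K)}\bigl(\mathrm{Hom}_K(L',\Kb),\,V\bigr)
\]
for $L'/K$ finite separable, extended to disjoint unions by taking products of sections. For any embedding $\psi : L' \hookrightarrow \Kb$, this evaluates to $V^{\gal(\Kb/\psi(L'))}$. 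To see $\Phi(V)$ is a finite type sheaf, the sheaf condition for a Galois cover $\sp L'' \to \sp L'$ reduces to $V^{\gal(\Kb/\psi(L'))} = \bigl(V^{\gal(\Kb/\psi''(L''))}\bigr)^{\gal(L''/L')}$, which is classical descent; the non-Galois case follows by passage to the Galois closure. Continuity and finite-dimensionality of $V$ force the common stabilizer of a basis of $V$ to contain $\gal(\Kb/L)$ for some finite Galois $L \subset \Kb$, so $\Phi(V)(L'') = V$ for every $L'' \supset L$ finite separable.

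Next I would check that $\Phi$ and $\omega_{\Kb}$ are quasi-inverse. The composite $\omega_{\Kb} \circ \Phi$ sends $V$ to $\varinjlim_{L \subset \Kb} V^{\gal(\Kb/L)} = V$ with its original $\gal(\Kb/K)$-action. For the other direction, fix $\cF \in \M_{K,F}$ and choose $L \subset \Kb$ finite Galois with $\cF(L) = \omega_{\Kb}(\cF)$; for any $\psi : L' \hookrightarrow \Kb$ factoring through $L$, the sheaf condition for the Galois cover $\sp L \to \sp \psi(L')$ gives
\[
\cF(L') \cong \cF(L)^{\gal(L/\psi(L'))} = \omega_{\Kb}(\cF)^{\gal(\Kb/\psi(L'))} = \Phi(\omega_{\Kb}(\cF))(L'),
\]
naturally in $L'$, so $\cF \cong \Phi(\omega_{\Kb}(\cF))$.

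Finally I would verify that $\omega_{\Kb}$ is exact, $F$-linear, and monoidal. Exactness and $F$-linearity are immediate from $\omega_{\Kb}$ being a filtered colimit of $F$-linear sections together with the exactness of filtered colimits. For the tensor structure, the tensor product on $\M_{K,F}$ is the sheafification of the pointwise tensor $(\cF \otimes \mathcal{G})(L) := \cF(L) \otimes_F \mathcal{G}(L)$; evaluated on an $L \subset \Kb$ large enough to compute both $\omega_{\Kb}(\cF)$ and $\omega_{\Kb}(\mathcal{G})$, this is already a sheaf, and passing to the colimit gives $\omega_{\Kb}(\cF \otimes \mathcal{G}) \cong \omega_{\Kb}(\cF) \otimes_F \omega_{\Kb}(\mathcal{G})$, compatibly with associativity and symmetry. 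I expect the main technical obstacle to be verifying the sheaf condition for $\Phi(V)$ on non-Galois covers, which is handled by the standard reduction to the Galois case via the Galois closure.
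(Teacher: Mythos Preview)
Your approach is essentially the same as the paper's: your quasi-inverse $\Phi$ is exactly the functor $S\mapsto\bigl(U\mapsto \hom_{G}(U(\Kb),S)\bigr)$ that the paper uses, since $\mathrm{Hom}_K(L',\Kb)=(\sp L')(\Kb)$ as $G$-sets. The only real difference is packaging: the paper cites a reference for the equivalence between sheaves of sets on $\Ke$ and continuous $G$-sets, then upgrades to $F$-vector spaces and matches the finiteness conditions, whereas you verify the descent statements directly and also spell out the tensor compatibility, which the paper leaves implicit. One minor imprecision worth tightening: in your check that $\cF\cong\Phi(\omega_{\Kb}(\cF))$ you fix a single $L$ and then only treat those $L'$ with an embedding into $L$; to cover all finite separable $L'$ you should allow yourself to enlarge $L$ (the finite-type hypothesis makes this harmless), or equivalently observe that the natural map $\cF\to\Phi(\omega_{\Kb}(\cF))$ becomes an isomorphism after evaluating at any sufficiently large Galois $L$, hence everywhere by the sheaf condition.
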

\begin{proof}
It is known (\cite{Dej13}, Theorem 55.3) that the functor
\[
\cF\mapsto\varinjlim_{L\subset\Kb}\cF(L)
\]
induces an equivalence betwee the category of sheaves of sets on $\Ke$ and the category of sets equipped with a continuous action of $G=\gal(\Kb/K)$. The inverse is given by
\begin{equation}
\label{eqinv}
S\mapsto\bigg(\cF_S:U\mapsto\hom_{G}(U(\Kb),S)\bigg).
\end{equation}
It follows that we also get an equivalence between sheaves of $F$-vector spaces on $\Ke$ and continuous $\gal(\Kb/K)$-representations on discrete $F$-vector spaces. 

A discrete $\gal(\Kb/K)$-representation $V$ is finite dimensional if and only there is some finite Galois $L/K$ such that $V$ factors through a finite dimensional representation of $\gal(L/K)$. From \eqref{eqinv}, we see that this happens if and only if $\cF_V(L)$ is finite dimensional and the map $\cF_V(L)\to\cF_V(L')$ is an isomorphism for every $L'/L$.
\end{proof}

Proposition \ref{propequiv} implies that $\M_{K,F}$ is a neutral Tannakian category, as it is equivalent to the finite dimensional representations of a group. The proposition implies $\au(\omega_{\Kb})$ is isomorphic to $\gal(\Kb/K)$, viewed as a constant profinite group scheme over $F$. 

Additionally, Proposition \ref{propequiv} implies that the Artin motives we define are equivalent to other notions of Artin motives in the literature. If $K$ has characteristic $0$, then $\M_{K,\Q}$ is equivalent to the category of absolute Hodge $0$-motives (\cite{Del82}, Proposition 6.17). For $K\subset\C$, we get that $\M_{K,\Q}$ is equivalent to the category of Nori $0$-motives (\cite{Hub17}, \S9.4).

\subsection{De Rham cohomology}
\label{ssdr}
Here we specialize to the case $F=K$. In this case there is another cohomology theory, algebraic de Rham cohomology, defined by
\[
H^0_{dR}(X):=\Gamma(X,\cO_X).
\]
Algebraic de Rham cohomology induces a fibre functor $\omega_{dR}$ on $\M_{K,K}$, which does not require constructing a separable closure of $K$.

\begin{definition}
We define the \emph{de Rham realization functor}
\begin{align*}
\omega_{dR}:\M_{K,K}&\to\ve_K,\\
\cF&\mapsto\colim_{L\in\Ke}(\cF(L)\otimes L)^{\gal(L/K)}.
\end{align*}
\end{definition}
\begin{proposition}
For $X\in\Ke$, there is an isomorphism
\[
\omega_{dR}(\m{X})\cong H^0_{dR}(X),
\]
which is functorial in $X$.
\end{proposition}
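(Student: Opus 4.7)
The plan is to identify both sides with the same concrete $K$-algebra. The functors $\m{-}$, $H^0_{dR}(-)$, and $\omega_{dR}$ are all additive with respect to disjoint unions of $X$, so I would first reduce to the case $X = \sp E$ for a single finite separable field extension $E/K$. In that case $H^0_{dR}(X) = \Gamma(X, \cO_X) = E$ by definition, so the goal is to produce a natural isomorphism $\omega_{dR}(\m{X}) \cong E$.

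Next I would choose a finite Galois extension $L/K$ large enough to split $E$, i.e. with $\#\hom_K(E, L) = [E:K]$. For such $L$, the underlying set $\spa(X_L)$ is canonically identified with $X(L) = \hom_K(E, L)$, so by Definition \ref{deft}(3)
\[
\m{X}(L) \otimes_K L = K^{X(L)} \otimes_K L = L^{X(L)}.
\]
Since $\m{X}$ is finite type and is constant on finite separable extensions of $L$, the transition maps in the colimit defining $\omega_{dR}$ are eventually isomorphisms, and hence the colimit is computed by a single term:
\[
\omega_{dR}(\m{X}) = \bigl(L^{X(L)}\bigr)^{\gal(L/K)}.
\]

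The final step is the identification $\bigl(L^{X(L)}\bigr)^{\gal(L/K)} \cong E$, which is the content of Galois descent: evaluation $e \mapsto \bigl(\phi \mapsto \phi(e)\bigr)$ is a $\gal(L/K)$-equivariant isomorphism $E \otimes_K L \sto L^{X(L)}$, where the Galois action on the left is on the second tensor factor and the action on the right is the diagonal one (acting on the $L$-values and, by postcomposition, on the indexing set $X(L)$). Taking invariants recovers $E$. I expect the main obstacle to be matching these two Galois actions carefully — in particular, verifying that the action induced on $\m{X}(L) = K^{X(L)}$ through the sheaf structure of $\m{X}$ is precisely the postcomposition action on $X(L)$, so that after tensoring with $L$ one genuinely obtains the diagonal action used in Galois descent. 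Once this is pinned down, functoriality in $X$ is automatic: a morphism $X \to X'$ induces a $\gal(L/K)$-equivariant map $X(L) \to X'(L)$ compatible with the identifications on both sides.
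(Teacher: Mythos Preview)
Your proposal is correct and follows essentially the same line as the paper. The only cosmetic difference is that you first reduce to $X=\sp E$ and then invoke Galois descent via $E\otimes_K L\cong L^{X(L)}$, whereas the paper skips the reduction and identifies $(L^{X(L)})^{\gal(L/K)}$ for arbitrary $X$ directly as the set of $\gal(L/K)$-equivariant functions $X(L)\to L$, which it recognizes as $\Gamma(X,\cO_X)$; this is the same descent computation in slightly different packaging, and your anticipated obstacle about matching the Galois actions is exactly the verification the paper leaves implicit.
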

\begin{proof}
We say a $0$-dimensional variety $X$ splits over an extension $L/K$ is $X_L$ is a discrete. The class of fields over which $X$ splits is cofinal among the finite separable extensions of $K$. We have
\begin{align*}
\colim_L\big(\m{X}(L)\otimes L\big)^{\gal(L/K)}&=\colim_L\big(L^{\spa(X_L)}\big)^{\gal(L/K)}\\
&=\colim_{\substack{L\\X\text{ splits over }L}}\big(L^{\spa(X_L)}\big)^{\gal(L/K)}\\
&=\colim_{\substack{L\\X\text{ splits over }L}}\big(L^{X(L)}\big)^{\gal(L/K)}\\
&=\colim_{\substack{L\\X\text{ splits over }L}}\big\{\gal(L/K)\text{-equivariant functions }X(L)\to L\big\}\\
&=\colim_{\substack{L\\X\text{ splits over }L}}\Gamma(X,\cO_X)\\
&=\Gamma(X,\cO_X).
\end{align*}
One checks that each equality above is functorial in $X$.
\end{proof}
 
 Finally, we prove Theorem \ref{tham}. We recall the statement here.
 \tham*
 
\begin{proof}
For each finite Galois extension $L/K$, define
\[
q_L(\cF):=(\cF(L)\otimes L)^{\gal(L/K)}.
\]
One checks that a morphism $\varphi:L_1\to L_2$ of $K$-extensions induces a $K$-linear map $\varphi_*:q_{L_1}(\cF)\to q_{L_2}(\cF)$, and that is $\psi:L_1\to L_2$ is another map of extensions, then $\varphi_*=\psi_*$ (the verification is similar to the proof of Theorem \ref{thuniq}). Thus for any separable closure $\Kb$ of $K$, we have
\begin{align*}
\omega_{dR}(\cF)&=\varinjlim_{L\subset \Kb}(\cF(L)\otimes L)^{\gal(L/K)}\\
&=(\omega_{\Kb}(\cF)\otimes \Kb)^{\gal(\Kb/K)}.
\end{align*}
By \cite{Del82}, Theorem 3.2, fibre functors on $\M_{K,K}$ correspond to $\gal(\Kb/K)$-torsors over $K$, and $\omega_{dR}$ corresponds to the torsor $\Kb$. Now, as a $\gal(\Kb/K)$-torsor, $\Kb$ corresponds to the Galois cohomology class in $H^1(K,\gal(\Kb/K))$ coming from the identity cocycle. It follows that $\au(\omega_{dR})$ is the inner form of $\gal(\Kb/K)$ corresponding to the class in $H^1(K,\mathrm{Inn}(\gal(\Kb/K)))$ whose cocycle is the natural map $\gal(\Kb/K)\to\mathrm{Inn}(\gal(\Kb/K))$. In other words, $\au(\omega_{dR})$ is isomorphic to the quotient
\[
\underline{\gal(\Kb/K)}_{\Kb}/\gal(\Kb/K),
\]
where $\gal(\Kb/K)$ acts semi-linearly on $\underline{\gal(\Kb/K)}_{\Kb}$ by conjugation. This shows that $\au(\omega_{dR})$ is isomorphic to $\gdr(K)$.
\end{proof}

\ack We thank Danny Krashen for several intersting discussions.

\bibliographystyle{hplain}
\bibliography{jrbiblio}

\newcommand{\noop}[1]{} \def\cprime{$'$}
\begin{thebibliography}{1}

\bibitem{Ban92}
Bernhard Banaschewski.
\newblock Algebraic closure without choice.
\newblock {\em Mathematical Logic Quarterly}, 38(1):383--385, 1992.

\bibitem{Dej13}
Aise~Johan De~Jong et~al.
\newblock The stacks project.
\newblock {\em \'{E}tale cohomology}, 2013.

\bibitem{Del82}
Pierre Deligne and James Milne.
\newblock Tannakian categories.
\newblock In {\em Hodge cycles, motives, and {S}himura varieties}, volume 900
  of {\em Lecture Notes in Mathematics}, pages 101--228. Springer-Verlag,
  Berlin-New York, 1982.

\bibitem{Hub17}
Annette Huber and Stefan M{\"u}ller-Stach.
\newblock {\em Periods and Nori Motives}, volume~65 of {\em Ergebnisse der
  Mathematik und ihrer Grenzgebiete}.
\newblock Springer Berlin Heidelberg, 2017.

\bibitem{Pin72}
David Pincus.
\newblock Zermelo-fraenkel consistency results by fraenkel-mostowski methods.
\newblock {\em The Journal of Symbolic Logic}, 37(04):721--743, 1972.

\bibitem{Ser13}
Jean-Pierre Serre.
\newblock {\em Galois cohomology}.
\newblock Springer Science \& Business Media, 2013.

\bibitem{Win08}
Kay Wingberg, Alexander Schmidt, and J{\"u}rgen Neukirch.
\newblock {\em Cohomology of number fields}.
\newblock Springer Berlin Heidelberg, 2008.

\end{thebibliography}
\end{document}